\newcommand{\parenthesis}[1]{\left(#1\right)} 
\newcommand{\braces}[1]{\left\{#1\right\}} 
\newcommand{\average}[1]{\left<#1\right>_{_{\lambda}}} 
\newcommand{\oscil}[1]{\left[#1\right]_{_{\lambda}}} 
\newcommand{\R}{\mathbb{R}}
\newcommand{\cF}{\mathcal{F}}
\newcommand{\cE}{\mathcal{E}}
\newcommand{\cP}{\mathcal{P}}
\newcommand{\cS}{\mathcal{S}}
\newcommand{\ba}{\boldsymbol{\alpha}}
\newcommand{\dd}{\mathop{}\!\mathrm{d}}
\newcommand{\eps}{\varepsilon}
\newcommand{\p}{\partial}
\newcommand{\RH}{\varrho}
\newcommand{\dx}{\dd{x}}
\newcommand{\dy}{\dd{y}}
\DeclareMathOperator{\dist}{dist}
\newtheorem{thm}{Theorem}[section]
\newtheorem{dfn}[thm]{Definition}
\newtheorem{lemma}[thm]{Lemma}
\newtheorem{prop}[thm]{Proposition}
\newtheorem{rmk}[thm]{Remark}
\numberwithin{equation}{section}
\title[Global bifurcation diagram]{On the global bifurcation diagram of the equation $-\Delta u=\mu|x|^{2\alpha}e^u$ in dimension two}
\author[D. Bartolucci, A. Jevnikar, R. Wu]{Daniele Bartolucci, Aleks Jevnikar, Ruijun Wu}
\address{Daniele Bartolucci, Department of Mathematics, University of Rome {\it "Tor Vergata"}, Via della ricerca scientifica n.1, 00133 Roma, Italy. }
\email{bartoluc@mat.uniroma2.it}
\address{Aleks Jevnikar, Department of Mathematics, Computer Science and Physics, University of Udine, Via delle Scienze 206, 33100 Udine, Italy.}
\email{aleks.jevnikar@uniud.it}
\address{Ruijun Wu, School of Mathematics and Statistics, Beijing Institute of Technology, Zhongguancun South Street No. 5, Haidian District, Beijing, P.R. China.}
\email{ruijun.wu@bit.edu.cn}
\begin{document}

\thanks{2020 \textit{Mathematics Subject classification:} 35B45, 35J60, 35J99. }

\thanks{The research of the first author is partially supported by the MIUR Excellence Department Project\\ MatMod@TOV
awarded to the Department of Mathematics, University of Rome Tor Vergata.}

\begin{abstract}
The aim of this note is to present the first qualitative global bifurcation diagram of the equation $-\Delta u=\mu|x|^{2\alpha}e^u$. To this end, we introduce the notion of domains of first/second kind for singular mean field equations and base our approach on a suitable spectral analysis. In particular, we treat also non-radial solutions and non-symmetric domains and show that the shape of the branch of solutions still resembles the well-known one of the model regular radial case on the disk. Some work is devoted also to the asymptotic profile for $\mu\to-\infty$.
\end{abstract}

\maketitle

{\bf Keywords}: Bifurcation analysis, singular Gelfand problem.

\

\section{Introduction}

Let~$\Omega\subset \R^2$ be a smooth bounded simply connected domain. We consider, for $\alpha>0$ and $\mu\in\R$, the equation
$$
-\Delta v= \mu |x|^{2\alpha} e^v \quad \mbox{ in } \Omega
$$
with Dirichlet boundary conditions, or, more in general
\begin{align}\tag{$L_\mu$} \label{eq:L-mu}
 \begin{cases}
  -\Delta v= \mu h(x) e^v & \mbox{ in } \Omega \\
  v=0 & \mbox{ on } \p\Omega,
 \end{cases}
\end{align}
with a singular weight
$$
	h(x)=\exp\left(-4\pi\sum_{j=1}^n\alpha_j G_{p_j}(x)  \right),
$$
where $\{p_1,\dots,p_n\}\subset\Omega$, $\alpha_j>0$ and $G_p$, $p\in\Omega$, is the Green function
$$
	 \begin{cases}
  -\Delta G_p(x)= \delta_p & \mbox{ in } \Omega \\
  G_p(x)=0 & \mbox{ on } \p\Omega.
 \end{cases}
$$
In particular, we have
$$
	h>0 \mbox{ in } \Omega\setminus\{p_1,\dots,p_n\}, \quad h(x)\approx |x-p_j|^{2\alpha_j} \mbox{ near } p_j.
$$
The Gelfand equation \eqref{eq:L-mu} and its mean field formulation \eqref{eq:MP-lambda} have attracted a
lot of attention due to their relevance in geometry and mathematical physics and there are by now many available results in the literature; we refer the interested readers for example to the introduction of \cite{BJLY} and the references quoted therein. However, at least to our knowledge, with the unique exception of the well-known case where $\Omega$ is a disk
with a singularity sitting in its center (see for example \cite{bartolucci2009uniqueness}),
there are no results about the global bifurcation diagram of \eqref{eq:L-mu}. The aim of this note is to fill this gap by obtaining a neat qualitative description of the shape of the Rabinowitz continuum emanating from the trivial solution with $\mu=0$, see \cite{rab}, for a class of general domains, even in a non-radial setting. In particular, the bifurcation diagram presents a bending point after which the first
eigenvalue of the linearized equation for \eqref{eq:L-mu} is negative and it is thus difficult to catch any monotonicity property of its solutions.

\

The approach is based on a suitable spectral analysis related to the singular mean field equation
\begin{align}\tag{$MP_\lambda$} \label{eq:MP-lambda}
 \begin{cases}
  -\Delta\psi_\lambda= \dfrac{h(x) e^{\lambda\psi_\lambda}}{\int_{\Omega} h(x) e^{\lambda\psi_{\lambda}}\dx }, & \mbox{ in } \Omega \\
  \psi_{\lambda}=0, & \mbox{ on } \p\Omega,
 \end{cases}
\end{align}
for $\lambda\in\R$, in the spirit of \cite{bartolucci2019global, bartolucci2019ontheglobal}. It is well-known that, due to the Moser-Trudinger inequality, for any $\lambda<8\pi$ the equation~\eqref{eq:MP-lambda} admits a solution, which is unique for $\Omega$ simply connected \cite{bartolucci2009uniqueness}, see also Remark \ref{rem:unique}. Let us also point out that if some $\alpha_j<0$, then the existence/uniqueness threshold is not $8\pi$ anymore; since we do not have a full understanding of this case we postpone its study to a future work. Finally, the critical/super critical case $\lambda\geq8\pi$ is subtler and existence/uniqueness is not granted in general.

\medskip

Following the pioneering work about the regular case \cite{CLMP}, we introduce the following notion for our singular setting. We first collect the set of singularities $p_j$ of orders $\alpha_j$ in the formal sum
$$
	\ba=\sum_{j=1}^n \alpha_j p_j,
$$
and denote by $(\Omega,\ba)$ the domain with that set of singularities.

\begin{dfn}
$(\Omega,\ba)$ is said to be a domain of first kind if for~$\lambda=8\pi$,~\eqref{eq:MP-lambda} admits no solution and of second kind otherwise.
\end{dfn}

\begin{rmk}
The existence/non existence problem for $\lambda=8\pi$ has been discussed in details in \cite{BL}. For example, consider a symmetric domain $\Omega$ with a singularity $p_1$ sitting in its center. Then, $(MP_{8\pi})$ has a solution for any $\alpha_1>0$. On the contrary, take a disk with a singularity not centered at the origin. Then, there exist $\alpha_-\leq\alpha_+$ such that $(MP_{8\pi})$ admits a solution for any $\alpha_1>\alpha_+$ and no solution for any $\alpha_1\leq\alpha_-$. The authors also show that for general domains with multiple singularities, under suitable assumptions, for small $\alpha_j$ the solvability of $(MP_{8\pi})$ is equivalent to the solvability of the corresponding regular problem, for which we refer to \cite{BDM,BL-ann,CCL}.

In particular, the class of first kind domains contains also non-symmetric configurations $(\Omega,\ba)$ for which a branch of non-radial solutions exists.
\end{rmk}

\

The idea is to parametrize the bifurcation diagram of \eqref{eq:L-mu} by using the (Dirichlet) energy of the unique, for $\lambda<8\pi$, solution $\psi_\lambda$ of \eqref{eq:MP-lambda}, that is
\begin{align}
 E_\lambda\equiv E(\psi_\lambda)
 =\frac{1}{2}\int_\Omega |\nabla\psi_\lambda|^2\dx.
\end{align}
It turns out that the function~$\lambda\mapsto E_\lambda\eqqcolon E(\lambda)$ enjoys some nice properties as we show in the next result.
\begin{thm} \label{thm1}
Let $(\Omega,\ba)$ be a domain of first kind. The function~$E\colon (-\infty, 8\pi) \to (0,+\infty)$ is strictly increasing and bijective, see Figure~\ref{pic:lambda-E}. In particular,
$$
	\lim_{\lambda\to-\infty} E(\lambda)=0, \quad \lim_{\lambda\to8\pi^-} E(\lambda)=+\infty.
$$
\end{thm}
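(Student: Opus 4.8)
The plan is to prove that $E$ is $C^1$ with strictly positive derivative, to identify the two limits, and to conclude bijectivity. Throughout write $\rho_\lambda:=-\Delta\psi_\lambda$ (a probability density, since $\int_\Omega\rho_\lambda\dx=1$), let $\average{g}:=\int_\Omega \rho_\lambda\,g\,\dx$ denote the $\rho_\lambda$–average, and set $\mathrm{Cov}(f,g):=\average{fg}-\average{f}\average{g}$. The first step is to record the structural input coming from the first/second kind dichotomy: for a domain of first kind the second variation of the mean field functional $J_\lambda(\psi)=\tfrac12\int_\Omega|\nabla\psi|^2\dx-\tfrac1\lambda\log\int_\Omega h e^{\lambda\psi}\dx$ at its unique critical point $\psi_\lambda$, namely $J_\lambda''[\phi,\phi]=\int_\Omega|\nabla\phi|^2\dx-\lambda\big(\average{\phi^2}-\average{\phi}^2\big)$, is positive definite for every $\lambda<8\pi$; equivalently the linearized operator $\mathcal L_\lambda w:=-\Delta w-\lambda\rho_\lambda(w-\average{w})$ is invertible on $H_0^1(\Omega)$. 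The implicit function theorem then makes $\lambda\mapsto\psi_\lambda$ smooth, so $E\in C^1$, and differentiating $E(\lambda)=\tfrac12\int_\Omega|\nabla\psi_\lambda|^2\dx$ gives $E'(\lambda)=\int_\Omega\nabla\psi_\lambda\cdot\nabla\partial_\lambda\psi_\lambda\dx=\average{\partial_\lambda\psi_\lambda}$. Testing the linearized equation $\mathcal L_\lambda(\partial_\lambda\psi_\lambda)=\rho_\lambda(\psi_\lambda-\average{\psi_\lambda})$ against $\psi_\lambda$ and against $\partial_\lambda\psi_\lambda$ produces the two identities $\average{\partial_\lambda\psi_\lambda}=\big(\average{\psi_\lambda^2}-\average{\psi_\lambda}^2\big)+\lambda\,\mathrm{Cov}(\psi_\lambda,\partial_\lambda\psi_\lambda)$ and $\mathrm{Cov}(\psi_\lambda,\partial_\lambda\psi_\lambda)=J_\lambda''[\partial_\lambda\psi_\lambda,\partial_\lambda\psi_\lambda]$, hence
$$E'(\lambda)=\big(\average{\psi_\lambda^2}-\average{\psi_\lambda}^2\big)+\lambda\,J_\lambda''[\partial_\lambda\psi_\lambda,\partial_\lambda\psi_\lambda].$$

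For $0\le\lambda<8\pi$ this settles monotonicity immediately: the variance term is strictly positive (as $\psi_\lambda$ is nonconstant, being positive inside and vanishing on $\p\Omega$) and the second term is $\ge0$ by the spectral gap, so $E'(\lambda)>0$. The delicate regime, and what I expect to be the main obstacle, is $\lambda<0$: there $\lambda\,J_\lambda''[\partial_\lambda\psi_\lambda,\partial_\lambda\psi_\lambda]<0$ and the identity by itself does not fix the sign, since one would need to control $\mathrm{Cov}(\psi_\lambda,\partial_\lambda\psi_\lambda)\le |\lambda|^{-1}\big(\average{\psi_\lambda^2}-\average{\psi_\lambda}^2\big)$, which no purely algebraic use of Cauchy–Schwarz seems to give. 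To handle this I would pass to the Gelfand formulation: $v_\lambda:=\lambda\psi_\lambda$ solves $-\Delta v=\mu h e^{v}$ with $\mu=\lambda\big(\int_\Omega h e^{\lambda\psi_\lambda}\dx\big)^{-1}<0$ and $v_\lambda\le0$. Because the nonlinearity $v\mapsto\mu he^v$ is \emph{strictly decreasing} for $\mu<0$, the problem is of monotone type: the solution is unique and the comparison principle yields monotone dependence on the parameter, which controls the averaged sign of $\partial_\lambda\psi_\lambda$ and upgrades the displayed identity to $E'(\lambda)>0$. The genuinely delicate bookkeeping here is reconciling the $\lambda$– and $\mu$–parametrizations together with the factor $\lambda^2$ relating the two energies, $\tfrac12\int_\Omega|\nabla v_\lambda|^2\dx=\lambda^2E(\lambda)$; I expect this reconciliation, rather than the comparison principle itself, to be the crux for $\lambda<0$.

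It remains to compute the limits and conclude. As $\lambda\to8\pi^-$ I would argue by contradiction: if $E(\lambda)$ stayed bounded then $\|\nabla\psi_\lambda\|_{L^2}$ is bounded, and the Moser–Trudinger inequality gives uniform control of $\int_\Omega h e^{\lambda\psi_\lambda}\dx$ and hence compactness, producing along a sequence $\lambda_k\to8\pi^-$ a solution of $(MP_{8\pi})$ — impossible, since $(\Omega,\ba)$ is of first kind. Thus $E(\lambda)\to+\infty$; this is precisely where the first–kind hypothesis enters. As $\lambda\to-\infty$, using convexity of $J_\lambda$ for $\lambda<0$ together with $\psi_\lambda\ge0$ and $e^{\lambda\psi_\lambda}\le1$, the comparison $J_\lambda(\psi_\lambda)\le J_\lambda(0)$ forces $\psi_\lambda\to0$ in $H_0^1(\Omega)$, so $E(\lambda)\to0$; intuitively the mass $\rho_\lambda$ concentrates toward $\p\Omega$, where $\psi_\lambda$ vanishes. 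Finally, a continuous strictly increasing function on $(-\infty,8\pi)$ with $\lim_{\lambda\to-\infty}E(\lambda)=0$ and $\lim_{\lambda\to8\pi^-}E(\lambda)=+\infty$ is necessarily a bijection onto $(0,+\infty)$, which is the assertion of the theorem.
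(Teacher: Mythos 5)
Your monotonicity identity $E'(\lambda)=\big(\average{\psi_\lambda^2}-\average{\psi_\lambda}^2\big)+\lambda\,J_\lambda''[\eta_\lambda,\eta_\lambda]$, with $\eta_\lambda=\p_\lambda\psi_\lambda$, is correct and is in fact equivalent to the paper's starting point; but your handling of $\lambda<0$ is a genuine gap, and the detour through the Gelfand formulation and a comparison principle that you sketch is never carried out (you yourself flag the reconciliation of the two parametrizations as an unresolved ``crux''). The paper closes the case $\lambda<0$ by \emph{not} aggregating the two terms: expand $\oscil{\psi_\lambda}=\sum_j a_j\oscil{\phi_j}$ and $\oscil{\eta_\lambda}=\sum_j b_j\oscil{\phi_j}$ in the eigenbasis of the modified spectral problem $L_{(\lambda,\psi_\lambda)}\phi_j=\sigma_j\RH_\lambda\oscil{\phi_j}$; testing the linearized equation gives $a_j=\sigma_j b_j$, whence
\begin{align}
 E'(\lambda)=\sum_{j\ge1}\big(\lambda a_j b_j+a_j^2\big)=\sum_{j\ge1}(\lambda+\sigma_j)\,\sigma_j\,b_j^2 .
\end{align}
Each factor is positive for \emph{every} $\lambda<8\pi$, including $\lambda<0$: one has $\sigma_j\ge\sigma_1>0$ (which follows from $\tau_1>0$ of \cite{bartolucci2009uniqueness}), and, testing $-\Delta\phi_j=(\lambda+\sigma_j)\RH_\lambda\oscil{\phi_j}$ against $\phi_j$, one finds $\lambda+\sigma_j=\int_\Omega|\nabla\phi_j|^2\dx\big/\int_\Omega\oscil{\phi_j}^2\RH_\lambda\dx\ \ge\ C_P(\lambda,\Omega)>0$, the weighted Poincar\'e constant. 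This mode-by-mode positivity is exactly what your aggregated identity cannot see, and it settles monotonicity on all of $(-\infty,8\pi)$ with no comparison-principle input.

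The second gap is the limit $\lambda\to-\infty$. Your comparison $J_\lambda(\psi_\lambda)\le J_\lambda(0)$ only yields
\begin{align}
 E(\lambda)\ \le\ \frac{1}{\lambda}\,\log\frac{\int_\Omega h e^{\lambda\psi_\lambda}\dx}{\int_\Omega h\dx},
\end{align}
and the right-hand side tends to $0$ only if $\int_\Omega h e^{\lambda\psi_\lambda}\dx$ decays sub-exponentially in $|\lambda|$, i.e.\ only if $\psi_\lambda$ is uniformly small in a boundary collar, uniformly in $\lambda$. That is essentially the assertion being proved, and nothing in your argument supplies it: the only uniform a priori bounds here are $W^{1,p}$, $p<2$ (measure data), which give no boundary modulus of continuity, so ``forces $\psi_\lambda\to0$ in $H^1_0$'' is unjustified. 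The paper proceeds differently: it passes to the density (free-energy) formulation, approximating $h$ by positive weights $h_n$, identifying $\psi_{\lambda}$ with the potential of the unique minimizer $\rho_{\lambda,n}$ of $\cF_{\lambda,n}$, and comparing with the collar densities $\rho_\delta=|C_\delta|^{-1}\mathds{1}_{C_\delta}$. Minimality gives $\cE(\rho_{\lambda,n})\le \cE(\rho_\delta)+O_\delta(1/|\lambda|)$, because the entropy and $\int\rho\log h_n$ terms all carry a prefactor $1/\lambda$, and $\cE(\rho_\delta)\to0$ as $\delta\to0$. The crucial point is that the test object is a \emph{measure concentrated at the boundary}, which your test function $\psi\equiv0$ fails to encode; that is what makes the comparison strong enough. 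By contrast, your $\lambda\to8\pi^-$ argument (boundedness would give compactness and a solution of $(MP_{8\pi})$, contradicting the first-kind hypothesis) does coincide with the paper's Lemma \ref{lem8pi}, and the final bijectivity step is fine.
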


\begin{figure}[h]
 \centering
 \begin{tikzpicture}
 \draw[gray, very thin,->] (0,-1) -- (0,5) node[left]{$E$};
 \draw[gray, very thin,->] (-3,0) -- (3,0) node[right]{$\lambda$};
 \draw[dashed]  (2,-1) - - (2, 5) node[pos=0.13, left]{$8\pi$};
 \draw[thick] plot [smooth] coordinates{(-3,0.05) (0,0.4) (1.2,1.5)(1.9,5)};
 \draw (0,0.6)  node[left]{$E_0$};
 \draw (0, -0.2) node[left]{$0$};
\end{tikzpicture}\\
 \caption{The energy of the solutions.}
 \label{pic:lambda-E}
\end{figure}
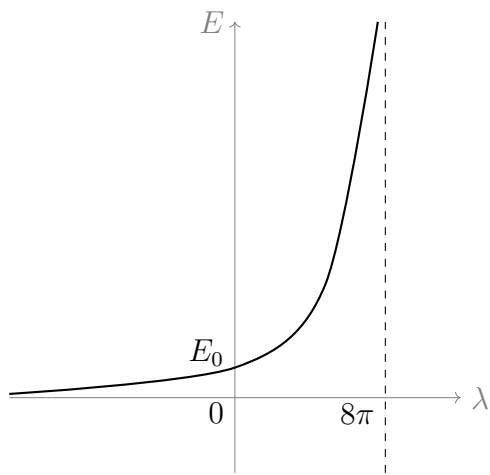

\

We evaluate the sign of the derivative of $E_\lambda$ by introducing a suitable spectral theory and by looking in particular at the Fourier modes of the linearized operator of \eqref{eq:MP-lambda}. The asymptotic for $\lambda\to8\pi^-$ is a simple consequence of the choice of the domain $(\Omega,\ba)$. On the other hand, some work is needed to prove the asymptotic for $\lambda\to-\infty$. Its regular counterpart was treated in 
\cite{bartolucci2019ontheglobal} based on the existence theory of free energy functionals in \cite{caglioti1995special}; however, due to the singular weight we need to attack the problem with a different strategy by using an approximation argument jointly with suitable test functions.

\

Now, for a solution~$(\lambda,\psi_\lambda)$ of~\eqref{eq:MP-lambda}, we have that
\begin{align}\label{eq:from lambda to mu}
 \mu_\lambda=\frac{\lambda}{\int_\Omega h e^{\lambda\psi_\lambda}\dx }, \quad
 u_\lambda=\lambda\psi_\lambda
\end{align}
is a solution of~\eqref{eq:L-mu}. Our aim is then to parametrize the bifurcation diagram by the energy expressed in terms of $\mu$, that is, for $\mu, \lambda\neq0$,
\begin{align}
 E\mu\equiv E(v_\mu)
 \coloneqq &\frac{1}{2}\int_\Omega |\nabla\psi_\lambda|^2\dx
 =\frac{1}{2\lambda^2}\int_\Omega |\nabla v_\mu|^2\dx \\
 =&\frac{1}{2\mu^2}\frac{1}{\parenthesis{\int_\Omega h e^{v_\mu}dy}^2}\int_\Omega (-\Delta v_\mu)v_\mu\dx \\
 =&\frac{1}{2\mu}\frac{1}{\int_\Omega he^{v_\mu}\dy} \int_\Omega \frac{h e^{v_\mu}}{\int_\Omega h e^{v_\mu}\dy} v_\mu \dx.
\end{align}
For $\mu=0$ the unique solution is given by~$v_0=0$ and the energy is just
\begin{align}
 E_0\equiv E(v_0)=
 \frac{1}{2}\frac{\int_\Omega\int_\Omega h(x) G(x,y)h(y)\dx\dy}{\parenthesis{\int_\Omega h\dx}^2},
\end{align}
where $G(x,y)$ is the usual Green function with Dirichlet boundary conditions. Letting $\Gamma$ be the branch of solutions $(\mu,v_\mu)$ crossing the origin $(0,0)$, we have the following result.

\begin{thm} \label{thm2}
Let $(\Omega,\ba)$ be a domain of first kind and let $\Gamma$ be as above. There exists a global parametrization
$$
	(\mu_E,v_{\mu_E})\in\Gamma, \quad E(\mu_E)=E\in(0,+\infty),
$$
such that we have, see Figures \ref{pic:mu-E}, \ref{fig}:
\begin{enumerate}
	\item \emph{(Asymptotics):} $\mu_E\to-\infty$ for $E\to0^+$ and $\mu_E\to0^+$ for $E\to+\infty$.
	
	\medskip
	
	\item \emph{(Monotonicity):} there exists $E_*$ such that $\mu_E$ is strictly increasing for $E<E_*$ and strictly decreasing for $E>E_*$.
\end{enumerate}
\end{thm}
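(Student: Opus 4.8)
The plan is to leverage Theorem~\ref{thm1}. Since $E\colon(-\infty,8\pi)\to(0,+\infty)$ is a strictly increasing bijection, its inverse $E\mapsto\lambda(E)$ is an increasing homeomorphism, and I would set $\mu_E:=\mu_{\lambda(E)}$ with $\mu_\lambda=\lambda/M(\lambda)$, where $M(\lambda):=\int_\Omega h e^{\lambda\psi_\lambda}\dx>0$. This immediately yields the global parametrization and reduces the whole statement to the one-variable analysis of $\lambda\mapsto\mu_\lambda$; moreover $\mu_\lambda$ and $\lambda$ share the same sign, and since $E$ is increasing, the monotonicity in $E$ coincides with that in $\lambda$. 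Writing $\rho_\lambda=he^{\lambda\psi_\lambda}/M(\lambda)$, so that $-\Delta\psi_\lambda=\rho_\lambda$, and using the routine identities $\int_\Omega\rho_\lambda\psi_\lambda\dx=2E_\lambda$ and $\int_\Omega\rho_\lambda\,\partial_\lambda\psi_\lambda\dx=E_\lambda'$ (the latter obtained by differentiating $E_\lambda=\tfrac12\int_\Omega|\nabla\psi_\lambda|^2\dx$ and integrating by parts), I would arrive at
\begin{align}
\frac{M'(\lambda)}{M(\lambda)}=2E_\lambda+\lambda E_\lambda', \\
\mu_\lambda'=\frac{1}{M(\lambda)}\bigl(1-2\lambda E_\lambda-\lambda^2E_\lambda'\bigr).
\end{align}

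For the asymptotics, as $E\to0^+$ we have $\lambda\to-\infty$ by Theorem~\ref{thm1}; since $\psi_\lambda\ge0$ and $\lambda<0$ give $e^{\lambda\psi_\lambda}\le1$, we get $0<M(\lambda)\le\int_\Omega h\dx$, whence $\mu_\lambda\le\lambda/\!\int_\Omega h\dx\to-\infty$. As $E\to+\infty$ we have $\lambda\to8\pi^-$; for a domain of first kind the density $\rho_\lambda$ concentrates and $\sup_\Omega\psi_\lambda\ge\int_\Omega\rho_\lambda\psi_\lambda\dx=2E_\lambda\to+\infty$, which forces $M(\lambda)\to+\infty$ by a standard blow-up estimate, so that $\mu_\lambda=\lambda/M(\lambda)\to0^+$, the sign being positive since $\lambda\to8\pi>0$.

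The monotonicity I would read through the linearized operator $L_\lambda:=-\Delta-\lambda\rho_\lambda$ on $H^1_0(\Omega)$. Differentiating $-\Delta u_\lambda=\mu_\lambda he^{u_\lambda}$ along the branch, with $u_\lambda=\lambda\psi_\lambda$ and $\mu_\lambda he^{u_\lambda}=\lambda\rho_\lambda$, gives $L_\lambda u_\lambda'=\mu_\lambda'M(\lambda)\rho_\lambda$. At an interior critical point $\lambda_c\in(0,8\pi)$ one has $M=\lambda_c M'$, hence $\int_\Omega\rho_{\lambda_c}u_{\lambda_c}'\dx=M'/M=1/\lambda_c\ne0$, so $u_{\lambda_c}'\not\equiv0$ and $u_{\lambda_c}'\in\Ker L_{\lambda_c}$; thus $\mu$ can be critical only where $L_\lambda$ is non-invertible. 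Now for $\lambda\le0$ we have $L_\lambda=-\Delta+|\lambda|\rho_\lambda$ with nonnegative potential, so $\Lambda_1(\lambda)>0$ and $L_\lambda$ is invertible; since also $\mu'(0)=1/M(0)>0$, $\mu$ has no critical point on $(-\infty,0]$ and, matching $\mu\to-\infty$ with $\mu(0)=0$, is strictly increasing there. On $(0,8\pi)$ one has $\mu>0$ with $\mu(0^+)=0^+$ and $\mu(8\pi^-)=0^+$, so $\mu$ attains an interior maximum at some $\lambda_*$. Granting that $\lambda_*$ is the \emph{only} value in $(0,8\pi)$ at which $L_\lambda$ degenerates, $\mu$ is then strictly monotone on $(0,\lambda_*)$ and on $(\lambda_*,8\pi)$ — increasing then decreasing by the boundary values — and setting $E_*:=E(\lambda_*)$ and transporting through the increasing bijection $E\mapsto\lambda(E)$ yields both assertions.

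The main obstacle is exactly this last spectral step: showing that, as $\lambda$ runs over $(0,8\pi)$, no eigenvalue of $L_\lambda$ other than the principal one reaches zero, and that the principal one crosses zero exactly once, so that $L_\lambda$ is invertible for all $\lambda\in(0,8\pi)\setminus\{\lambda_*\}$. Equivalently, differentiating $\mu=\lambda/M$ twice and using $M=\lambda_c M'$ at a critical point gives $\mu''(\lambda_c)=-\lambda_c M''(\lambda_c)/M(\lambda_c)^2$, so it suffices to prove the convexity-type estimate $M''(\lambda_c)=M(\lambda_c)\bigl(\lambda_c^{-2}+3E_{\lambda_c}'+\lambda_c E_{\lambda_c}''\bigr)>0$, which makes every critical point a strict local maximum and hence unique. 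I expect this to follow from the mode-by-mode control of $L_\lambda$ already developed for Theorem~\ref{thm1} together with a sign property of $E_\lambda''$; by comparison, the two asymptotics and the $\lambda\le0$ analysis are routine.
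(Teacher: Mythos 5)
Your skeleton coincides with the paper's: parametrize the branch by $E$ through Theorem \ref{thm1}, reduce everything to the sign of $\frac{\dd\mu_\lambda}{\dd\lambda}$, and your formula $\mu_\lambda'=\frac{1}{M(\lambda)}\bigl(1-2\lambda E_\lambda-\lambda^2E_\lambda'\bigr)$ is exactly the paper's $g(\lambda)=1-\lambda\average{z_\lambda}$, since $\average{z_\lambda}=2E_\lambda+\lambda E_\lambda'$. Your asymptotics are essentially sound: the direct bound $\mu_\lambda\le\lambda/\int_\Omega h\dx$ for $\lambda<0$ is even more self-contained than the paper's argument, and your appeal to blow-up theory for $\mu_\lambda\to0^+$ as $\lambda\to8\pi^-$ is on the same footing as the paper's citation of \cite{BL}. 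The analysis for $\lambda\le0$ and the identification of critical points of $\mu$ with degeneracy of the unconstrained linearization $L_\lambda=-\Delta-\lambda\RH_\lambda$ (via $L_\lambda z_\lambda=\mu_\lambda'M\RH_\lambda$ and $\average{z_{\lambda_c}}=1/\lambda_c\neq0$) are also correct.

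However, part (2) of the theorem is not proved: you explicitly leave open the one genuinely hard step, namely that $\mu$ has a \emph{unique} critical point in $(0,8\pi)$ (equivalently, that $g$ has a unique zero, or in your reformulation that $M''(\lambda_c)>0$ at every critical point). Your hope that this follows from the ``mode-by-mode control of $L_\lambda$ already developed for Theorem \ref{thm1}'' cannot work as stated: the spectral theory behind Theorem \ref{thm1} concerns the \emph{constrained} operator $\eta\mapsto-\Delta\eta-\lambda\RH_\lambda\oscil{\eta}$, whose first eigenvalue $\sigma_1$ stays strictly positive for \emph{all} $\lambda<8\pi$; it is therefore blind to the value $\lambda_*$ where the unconstrained operator $L_\lambda$ degenerates, and no sign property of $E_\lambda''$ is established anywhere in that analysis. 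The paper closes this gap by a different mechanism: it writes $g'(\lambda)=a(\lambda)g(\lambda)+b(\lambda)$ with $b(\lambda)=-\lambda^2\average{z_\lambda^3}$ (Lemma 5.2 of \cite{bartolucci2019ontheglobal}) and invokes Proposition 5.1(iii) there, which asserts that at any zero $\hat\lambda$ of $g$ the kernel element $z_{\hat\lambda}$ is nonnegative, whence $\langle z_{\hat\lambda}^3\rangle_{\hat\lambda}>0$ and $g'(\hat\lambda)=b(\hat\lambda)<0$; thus every zero of $g$ is a strictly decreasing crossing, hence unique. This positivity of $z_{\hat\lambda}$ at degenerate points --- precisely the missing ``sign property'' in your outline --- is a nontrivial fact about the unconstrained linearization that must be imported or reproved; without it, your argument only shows that $\mu$ attains some interior maximum on $(0,8\pi)$, not the global increasing/decreasing dichotomy claimed in the theorem.
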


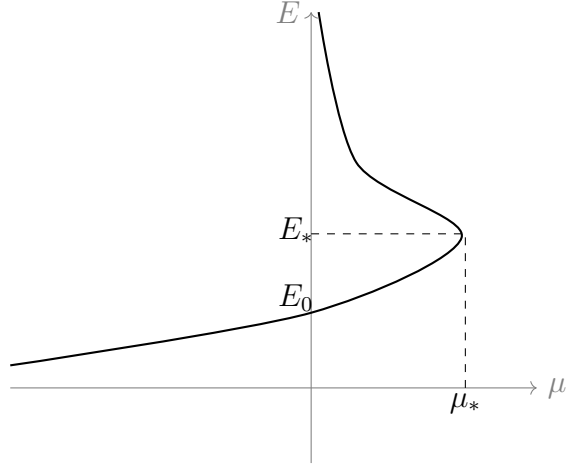
\begin{figure}[h]
 \centering
 \begin{tikzpicture}
 \draw[gray, thin,->] (-4,0) -- (3,0) node[right]{$\mu$};
 \draw[gray, thin,->] (0,-1) -- (0,5) node[left]{$E$};
 \draw[thick] plot [smooth] coordinates{(-4,0.3) (0,1) (2,2) (0.6,3) (0.1,5)};
 \draw (-0.2,1.2) node{$E_0$};
 \draw (-0.2,2.1) node{$E_*$};
 \draw[dashed] (0,2.05)--(2, 2.05);
 \draw[dashed] (2.05,0)--(2.05,2.05);
 \draw (2.05,-0.2) node{$\mu_*$};
\end{tikzpicture}\\
 \caption{The shape of the bifurcation diagram.}
 \label{pic:mu-E}
\end{figure}

\begin{rmk} \label{rem:unique}
Let us point out that we are considering $\Omega$ simply connected to ensure uniqueness of the solution for $\lambda<8\pi$, see \cite{bartolucci2009uniqueness}. However, under suitable assumptions on the weight function in the spirit of \cite{BL-ann}, it is straightforward to check that uniqueness still hold on multiply connected domains and thus Theorem \ref{thm2} covers this case as well.
\end{rmk}

\medskip

This seems to be the first qualitative result about the global bifurcation diagram of \eqref{eq:L-mu}. Observe that the shape of the branch of solutions resembles the well-known one of the model regular radial case on the disk, see for example \cite{suzuki1992global}, even for non-symmetric configurations $(\Omega,\ba)$.

\medskip

By exploiting Theorem \ref{thm1} jointly with some further spectral analysis it is possible in particular to catch the monotonic behavior of the quantities describing the diagram. We finally point out that we study the linearization of an essentially constrained problem \eqref{eq:MP-lambda} and we thus cannot rely on standard techniques based on the simpleness of the first eigenvalue, positivity of the first eigenfunction or on the maximum principle.

\

The paper is organized as follows. In section \ref{sec:prelim} we collect some useful results and introduce the spectral setting, in section \ref{sec:energy} we study the energy of the solutions and in section \ref{sec:diagram} we finally describe the bifurcation diagram.

\

\section{Preliminaries and notations} \label{sec:prelim}
Here we introduce the spectral theory and list some of its properties. Given~$\lambda\in\R$ and~$\psi\in H^1_0(\Omega)$, we write
\begin{align}
 \RH_\lambda(\psi)\equiv
 \frac{h e^{\lambda\psi}}{\int_{\Omega} h e^{\lambda\psi}\dx}.
\end{align}
Then~$\RH_\lambda(\psi)\dx$ is a probability measure on~$\Omega$, 
and we can decompose a Sobolev function~$u\in H^1(\Omega)$ into the average part and oscillation part:
\begin{align}
 u=\average{u}+ \oscil{u}, & &
 \mbox{ where } \quad
 \average{u} =\int_\Omega u \RH_\lambda \dx , \quad
 \oscil{u}= u-\average{u}.
\end{align}
Then we know that the Poincar\'e constant
\begin{align}
 C_P(\lambda,\Omega)
 \coloneqq \inf\braces{\frac{\int_{\Omega}|\nabla\phi|^2\dx}{\int_\Omega \phi^2 \RH_\lambda\dx} \mid \phi\in H^1(\Omega), \average{\phi}=0 }
\end{align}
is positive.

\

For a fixed~$\alpha\in (0,1)$ consider the operator
\begin{align}
 F\colon \R\times C^{2,\alpha}_0(\bar{\Omega})\to C^\alpha(\bar{\Omega}), \qquad
 F(\lambda,\psi)=-\Delta \psi-\RH_\lambda(\psi).
\end{align}
The linearization of~$F$ with respect to the~$\psi$ variable is
\begin{align}
 L_{(\lambda,\psi)}\eta
 \equiv D_\psi F(\lambda,\psi)\eta
 =-\Delta\eta-\lambda\RH_\lambda \oscil{\eta}.
\end{align}
It defines an elliptic operator which is also formally self-adjoint on the subspace of $L^{2}(\Omega)$ functions of vanishing mean ($\average{\phi}=0$), see~\cite{bartolucci2019global} for further details. At this point we set up a suitable spectral theory which will turn out to be natural for our problem.
\begin{dfn}
$\sigma\in\R$ is an eigenvalue of the operator $ L_{(\lambda,\psi)}$ if there exists a non-trivial $\phi\in H^1_0(\Omega)$ such that
$$
 L_{(\lambda,\psi)}\phi=\sigma \RH_\lambda\oscil{\phi}.
$$
\end{dfn}
The definition differs from the standard one by requiring zero (weighted) average eigenfunctions in the above right-hand side. However, it is possible to check that one can built on it a spectral theory with the usual properties and we refer to \cite{bartolucci2019global} for further comments in this respect.

\medskip

The linearized operator can be used to analyze the set of solutions, as the following lemma indicates:
\begin{lemma}
 Let~$\lambda_0\in\R$ and~$\psi_{\lambda_0}$ be a solution of~$(MP_{\lambda_0})$.
 If~$0$ is not an eigenvalue of~$L_{(\lambda_0,\psi_{\lambda_0})}$, then
 \begin{enumerate}
  \item[(i)] $L_{(\lambda_0,\psi_{\lambda_0})}\coloneqq C^{2,\alpha}_0(\Omega) \to C^\alpha(\Omega)$ is an isomorphism.
  \item[(ii)] There exists an open neighborhood~$\mathscr{U}=I\times B\subset \R\times C^{2,\alpha}_0(\Omega)$ of~$(\lambda_0,\psi_{\lambda_0})$ such that the set of solutions of~\eqref{eq:MP-lambda} in~$\mathscr{U}$ forms an analytic curve
  \begin{align}
   I\to B,\quad \lambda \to \psi_\lambda
  \end{align}
  which passes through~$(\lambda_0,\psi_{\lambda_0})$.
\end{enumerate}
\end{lemma}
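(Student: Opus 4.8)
The plan is to deduce both statements from the structure of $L_{(\lambda_0,\psi_{\lambda_0})}$ as a zeroth-order perturbation of $-\Delta$, combined with the analytic implicit function theorem in Banach spaces.

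For part (i), I would start from the fact that $-\Delta\colon C^{2,\alpha}_0(\Omega)\to C^\alpha(\Omega)$ is an isomorphism by Schauder theory, and write
\begin{align}
 L_{(\lambda_0,\psi_{\lambda_0})}\eta=-\Delta\eta-K\eta,
 \qquad K\eta\coloneqq \lambda_0\,\RH_{\lambda_0}\oscil{\eta}.
\end{align}
It then suffices to show that $K\colon C^{2,\alpha}_0(\Omega)\to C^\alpha(\Omega)$ is compact, since this makes $L_{(\lambda_0,\psi_{\lambda_0})}$ a Fredholm operator of index zero. The operator $K$ factors through the compact embedding $C^{2,\alpha}(\bar\Omega)\hookrightarrow C^\alpha(\bar\Omega)$, followed by the bounded operation of subtracting the weighted mean $\average{\eta}$ and then multiplying by the fixed function $\lambda_0\RH_{\lambda_0}$. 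The last step is bounded on $C^\alpha(\Omega)$ because $C^\alpha(\bar\Omega)$ is a Banach algebra and $\RH_{\lambda_0}\in C^\alpha(\bar\Omega)$; the latter holds here since $h(x)\approx|x-p_j|^{2\alpha_j}$ is Hölder continuous and $\psi_{\lambda_0}\in C^{2,\alpha}_0(\Omega)$. Hence $K$ is compact.

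Next I would identify the hypothesis with injectivity. By the eigenvalue definition, $0$ fails to be an eigenvalue precisely when the only $\phi\in H^1_0(\Omega)$ with $L_{(\lambda_0,\psi_{\lambda_0})}\phi=0$ is $\phi=0$; here the right-hand side $\sigma\,\RH_{\lambda_0}\oscil{\phi}$ vanishes for $\sigma=0$, so the eigenvalue condition reduces to a genuine kernel condition. A standard elliptic bootstrap (using the Hölder regularity of $\RH_{\lambda_0}$ and the $2$-dimensional Sobolev embeddings) shows that any such $H^1_0$ solution in fact lies in $C^{2,\alpha}_0(\Omega)$, so $L_{(\lambda_0,\psi_{\lambda_0})}$ is injective there. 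A Fredholm operator of index zero that is injective is an isomorphism, which proves (i).

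For part (ii), I would apply the analytic implicit function theorem to $F$ at the point $(\lambda_0,\psi_{\lambda_0})$. One checks that $F(\lambda_0,\psi_{\lambda_0})=0$ because $\psi_{\lambda_0}$ solves $(MP_{\lambda_0})$; that $F$ is real-analytic as a map $\R\times C^{2,\alpha}_0(\Omega)\to C^\alpha(\Omega)$, since $\RH_\lambda(\psi)=h e^{\lambda\psi}\big/\!\int_\Omega h e^{\lambda\psi}\dx$ is analytic in the pair $(\lambda,\psi)$ (the numerator is an analytic map into $C^\alpha(\Omega)$ and the scalar denominator is analytic and strictly positive); and that $D_\psi F(\lambda_0,\psi_{\lambda_0})=L_{(\lambda_0,\psi_{\lambda_0})}$ is an isomorphism by part (i). The theorem then produces a neighborhood $\mathscr U=I\times B$ and an analytic map $\lambda\mapsto\psi_\lambda$ whose graph is exactly the solution set of $(MP_\lambda)$ inside $\mathscr U$ and which passes through $(\lambda_0,\psi_{\lambda_0})$.

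I expect the main obstacle to be the compactness/regularity bookkeeping forced by the singular weight: one must verify that $\RH_{\lambda_0}$, and hence multiplication by it, is well behaved in the $C^\alpha$ scale despite the vanishing of $h$ at the points $p_j$, which is precisely why the Hölder exponent $\alpha\in(0,1)$ must be fixed in advance (small relative to the orders $\alpha_j$). The other delicate point is establishing genuine Banach-space analyticity of $F$, rather than mere smoothness, as this is what upgrades the solution curve from $C^\infty$ to analytic.
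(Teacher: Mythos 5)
Your proposal is correct and follows essentially the same route as the paper, whose proof of this lemma is deliberately terse: it simply invokes ``standard arguments based on Fredholm alternative and implicit function theorem'' with a reference to \cite{bartolucci2019global}. Your write-up fills in precisely those details --- compactness of the multiplication-by-$\lambda_0\RH_{\lambda_0}\oscil{\,\cdot\,}$ term giving index zero, injectivity from the non-eigenvalue hypothesis, and the analytic implicit function theorem --- including the relevant subtlety that $\alpha$ must be chosen small relative to the $\alpha_j$ so that $h$, and hence $\RH_{\lambda_0}$, lies in $C^\alpha(\bar\Omega)$.
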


\begin{proof}
 The proof follows standard arguments based on Fredholm alternative and implicit function theorem and we refer to~\cite{bartolucci2019global} for details.
\end{proof}

\

Let~$(\lambda,\psi_\lambda)$ be a solution of~\eqref{eq:MP-lambda}.
To see whether~$L_{(\lambda,\psi_{\lambda})}$ is invertible or not, we consider
\begin{align}
 \sigma_1(\lambda,\psi_\lambda)
 \coloneqq \inf_{\phi\in H^1_0(\Omega)\setminus\{0\}}
 \frac{\int_\Omega |\nabla\phi|^2\dx -\lambda\int_\Omega \oscil{\phi}^2\; \RH_\lambda\dx}{\int_\Omega \oscil{\phi}^2 \;\RH_\lambda\dx}.
\end{align}
We have the following crucial observation
\begin{lemma}
For any $\lambda<8\pi$ we have $ \sigma_1(\lambda,\psi_\lambda)>0$.
\end{lemma}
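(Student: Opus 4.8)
The plan is to read $\sigma_1(\lambda,\psi_\lambda)$ as the bottom of the spectrum of the linearized operator and to reformulate the claim as the strict positivity of the quadratic form
\[
Q_\lambda(\phi)\coloneqq \int_\Omega|\nabla\phi|^2\dx-\lambda\int_\Omega\oscil{\phi}^2\,\RH_\lambda\dx,\qquad \phi\in H^1_0(\Omega),
\]
relative to the weighted quantity $\int_\Omega\oscil{\phi}^2\RH_\lambda\dx$. The case $\lambda\le 0$ is immediate: since $\nabla\oscil{\phi}=\nabla\phi$ and $\average{\oscil{\phi}}=0$, the positivity of the Poincaré constant $C_P(\lambda,\Omega)$ gives $Q_\lambda(\phi)\ge\int_\Omega|\nabla\phi|^2\dx\ge C_P\int_\Omega\oscil{\phi}^2\RH_\lambda\dx$, whence $\sigma_1\ge C_P>0$. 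Thus the whole issue is confined to the range $0<\lambda<8\pi$, and the threshold $8\pi$ must enter decisively.

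For $0<\lambda<8\pi$ I would first observe that $Q_\lambda$ is exactly the second variation at $\psi_\lambda$ of the free energy functional $J_\lambda(\psi)=\tfrac12\int_\Omega|\nabla\psi|^2\dx-\tfrac1\lambda\log\int_\Omega he^{\lambda\psi}\dx$, whose Euler--Lagrange equation is $(MP_\lambda)$; indeed a direct computation gives $D^2J_\lambda(\psi_\lambda)[\eta,\eta]=Q_\lambda(\eta)$, the last term being $\lambda$ times the $\RH_\lambda$-variance of $\eta$. The (singular) Moser--Trudinger inequality --- valid with at least the regular constant since $h$ vanishes at the $p_j$ --- yields $\log\int_\Omega he^{\lambda\psi}\dx\le\tfrac{\lambda^2}{16\pi}\int_\Omega|\nabla\psi|^2\dx+C$, so that $J_\lambda(\psi)\ge\tfrac12\parenthesis{1-\tfrac{\lambda}{8\pi}}\int_\Omega|\nabla\psi|^2\dx-C/\lambda$ is coercive for $\lambda<8\pi$. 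Hence $J_\lambda$ attains its minimum; the minimizer solves $(MP_\lambda)$, and by the uniqueness of the solution for $\lambda<8\pi$ it must coincide with $\psi_\lambda$. Being a global minimizer, $\psi_\lambda$ satisfies $Q_\lambda(\eta)=D^2J_\lambda(\psi_\lambda)[\eta,\eta]\ge0$ for all $\eta$, i.e. $\sigma_1(\lambda,\psi_\lambda)\ge0$.

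The main obstacle is the upgrade from $\ge 0$ to $>0$, that is, ruling out that $0$ is an eigenvalue. I would run an open--closed argument on $S\coloneqq\braces{\lambda\in(-\infty,8\pi)\mid\sigma_1(\lambda,\psi_\lambda)>0}$. The set $S$ is nonempty (it contains $(-\infty,0]$) and open: where $\sigma_1>0$ the operator $L_{(\lambda,\psi_\lambda)}$ is an isomorphism, the solution is locally an analytic curve by the earlier Lemma, and the infimum defining $\sigma_1$ depends continuously on $\lambda$, so it stays positive nearby. For closedness in $(-\infty,8\pi)$, suppose $\lambda_n\in S$ with $\lambda_n\to\lambda_0<8\pi$; from the previous paragraph $\sigma_1(\lambda_0)\ge0$, and if $\sigma_1(\lambda_0)=0$ then (after verifying that the infimum is attained, by the compactness furnished by Moser--Trudinger together with the weighted Poincaré inequality) there is a nontrivial $\phi_*$ with $L_{(\lambda_0,\psi_{\lambda_0})}\phi_*=0$, so $0$ is an eigenvalue and $\psi_{\lambda_0}$ is a degenerate solution. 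A degenerate point on the branch forces, via bifurcation theory, either a fold in the $\lambda$-parameter or a secondary bifurcation; either possibility produces a second solution of $(MP_\lambda)$ for $\lambda$ near $\lambda_0$, contradicting the global uniqueness for $\lambda<8\pi$. Hence $\sigma_1(\lambda_0)>0$, $S$ is closed, and by connectedness $S=(-\infty,8\pi)$.

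I expect the delicate point to be precisely this closedness step, i.e. excluding the zero eigenvalue: positivity of the second variation is only borderline as $\lambda\uparrow8\pi$ (where the energy $E_\lambda$ blows up and first--kind domains guarantee non-solvability at $8\pi$), so the strict inequality cannot come from coercivity alone and must be extracted from the rigidity imposed by uniqueness. An alternative, more quantitative route --- to be pursued if the bifurcation argument proves awkward about simplicity or transversality of the crossing --- is to establish an improved Moser--Trudinger inequality on $(\Omega,\ba)$ giving a definite gap $\sigma_1(\lambda,\psi_\lambda)\ge\delta_\lambda>0$ directly, in the spirit of the spectral analysis of \cite{bartolucci2019global}; this would also make the continuity of $\sigma_1$ used above completely transparent.
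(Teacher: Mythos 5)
Your first half is sound: identifying $Q_\lambda$ with the second variation of $J_\lambda(\psi)=\tfrac12\int_\Omega|\nabla\psi|^2\dx-\tfrac1\lambda\log\int_\Omega he^{\lambda\psi}\dx$, obtaining coercivity from Moser--Trudinger for $\lambda<8\pi$ (the weight $h$ is bounded since all $\alpha_j>0$), and invoking uniqueness to conclude that $\psi_\lambda$ is the global minimizer does give $\sigma_1(\lambda,\psi_\lambda)\ge 0$. The gap is exactly where you suspected it: the upgrade to strict inequality. Your closedness step rests on the claim that a degenerate solution (a nontrivial kernel of $L_{(\lambda_0,\psi_{\lambda_0})}$) forces ``either a fold or a secondary bifurcation'' and hence a second solution nearby, contradicting uniqueness. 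This implication is false in general: degeneracy only means the implicit function theorem is not applicable; it does not by itself produce additional solutions. A finite-dimensional model makes this plain: $F(\lambda,x)=(x-\lambda)^3$ has, for every $\lambda$, the unique and degenerate solution $x=\lambda$, with no fold and no bifurcation anywhere. Nothing in your argument excludes the analogous scenario here, namely a branch of unique but degenerate minimizers with $\sigma_1=0$ at some $\lambda\in(0,8\pi)$. Non-degeneracy of solutions of mean field equations below $8\pi$ is a genuine theorem, proved in \cite{bartolucci2009uniqueness} via spectral estimates based on the (singular) Alexandrov--Bol isoperimetric inequality; it is not a soft consequence of uniqueness, and your proposal in effect assumes the hard part.

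For comparison, the paper's proof is a two-line reduction to precisely that known result: it introduces the standard eigenvalue $\tau_1(\lambda,\psi_\lambda)$, defined with $\int_\Omega\phi^2\,\RH_\lambda\dx$ in place of $\int_\Omega\oscil{\phi}^2\,\RH_\lambda\dx$ in the denominator, quotes $\tau_1>0$ for all $\lambda<8\pi$ from \cite{bartolucci2009uniqueness}, and then observes that positivity of $\tau_1$ makes the numerator positive, so that for every admissible $\phi$
\begin{align}
\int_\Omega|\nabla\phi|^2\dx-\lambda\int_\Omega\oscil{\phi}^2\,\RH_\lambda\dx
\;\ge\;\tau_1\int_\Omega\phi^2\,\RH_\lambda\dx
\;\ge\;\tau_1\int_\Omega\oscil{\phi}^2\,\RH_\lambda\dx,
\end{align}
where the last step uses $\int_\Omega\phi^2\,\RH_\lambda\dx=\int_\Omega\oscil{\phi}^2\,\RH_\lambda\dx+\average{\phi}^2$; hence $\sigma_1\ge\tau_1>0$. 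If you want a self-contained argument, the quantitative route you sketch at the very end (a definite spectral gap from a sharp Bol-type inequality) is the one that actually works, but carrying it out amounts to reproving the cited estimate rather than deducing it from uniqueness.
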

\begin{proof}
Consider the standard eigenvalue
\begin{align}
 \tau_1(\lambda,\psi_\lambda)
 \coloneqq \inf_{\phi\in H^1_0(\Omega)\setminus\{0\}}
 \frac{\int_\Omega |\nabla\phi|^2\dx -\lambda\int_\Omega \oscil{\phi}^2\; \RH_\lambda\dx}{\int_\Omega \phi^2 \;\RH_\lambda\dx}
\end{align}
for which we know that $\tau_1(\lambda,\psi_\lambda)>0$ for any $\lambda<8\pi$, see \cite{bartolucci2009uniqueness}. The thesis then follows just by observing that
$$
	\sigma_1(\lambda,\psi_\lambda)\geq \tau_1(\lambda,\psi_\lambda)>0.
$$
\end{proof}

\medskip

Hence for any solution~$(\lambda,\psi_\lambda)$, the linearization operator~$L_{(\lambda,\psi_\lambda)}$ has no zero eigenvalue, hence is invertible, and the solutions~$\psi_\lambda$ have analytic dependence on~$\lambda$.

\

\section{The energy of solutions} \label{sec:energy}

The aim of this section is to study the energy along the branch of solutions~$(\lambda,\psi_\lambda)$,
\begin{align}
 E_\lambda\coloneqq \frac{1}{2}\int_{\Omega}|\nabla\psi_\lambda|^2 \dx,
\end{align}
showing its monotonicity and describing its asymptotics. Observe that the energy is nothing but the~$\RH_\lambda$-average of the solution~$\psi_\lambda$ (up to a constant~$1/2$):
\begin{align}
 E_\lambda
 = \frac{1}{2}\int_{\Omega}|\nabla\psi_\lambda|^2 \dx
 = \frac{1}{2}\int_\Omega (-\Delta\psi_\lambda)\psi_\lambda\dx
 = \frac{1}{2}\int_{\Omega} \RH_\lambda \psi_\lambda \dx
 = \frac{1}{2}\average{\psi_\lambda}.
\end{align}
We divide the proof of Theorem \ref{thm1} in the next subsections.

\

\subsection{Monotonicity of the energy}
We start by showing the monotonic property; we have to exploit here the modified spectral theory introduced in the previous section. Let
\begin{align}
 \eta_\lambda\equiv \frac{\p\psi_\lambda}{\p\lambda},
\end{align}
which satisfies the equation
\begin{align}\label{eq:L-eta}
 L_{(\lambda,\psi_\lambda)}\eta_\lambda
 =-\Delta\eta_\lambda-\lambda\RH_\lambda\oscil{\eta_\lambda}
 =\RH_{\lambda} \cdot \oscil{\psi_\lambda}.
\end{align}
 Observe that
\begin{align}
 \frac{\dd E_\lambda}{\dd \lambda}
 =& \frac{1}{2}\frac{\dd}{\dd\lambda}\int_{\Omega} |\nabla\psi_\lambda|^2\dx
   =\int_{\Omega} (-\Delta\eta_\lambda) \psi_\lambda \dx
\end{align}
and using~\eqref{eq:L-eta} we get
\begin{align}\label{eq:E-lambda-1}
 \frac{\dd E_\lambda}{\dd \lambda}
 =\int_{\Omega} \lambda\RH_\lambda \oscil{\eta_\lambda}\cdot \psi_\lambda \dx + \int_\Omega \RH_\lambda \oscil{\psi_\lambda}\cdot \psi_\lambda\dx.
\end{align}
At this stage, we need to use the (weighted) eigenfunctions~$\{\phi_j\}$ of~$L_{(\lambda,\psi_\lambda)}$ as an orthonormal basis for~$L^2(\RH_\lambda\dx)$:
\begin{align}
 L_{(\lambda,\psi_\lambda)}\phi_j=\sigma_j \RH_\lambda\oscil{\phi_j}, & &
 \int_\Omega \oscil{\phi_j} \oscil{\phi_k}\;\RH_\lambda\dx=\delta_{jk}, & &
 \phi_j\in H^1_0(\Omega).
\end{align}
Here~$\sigma_j\equiv \sigma_j(\lambda,\psi_\lambda)$.
This is possible since the measure~$\RH_\lambda\dx$ only vanishes at a discrete set of points.
Note that~$\phi_0=const$ is the trivial eigenfunction, which is not considered here since we need~$\phi_j\in H_0^1(\Omega)$.

In terms of this basis, we can write
\begin{align}
 \oscil{\psi_\lambda}=\sum_{j=1}^\infty a_j\oscil{\phi_j},
\quad
 \oscil{\eta_\lambda}=\sum_{j=1}^\infty b_j\oscil{\phi_j}.
\end{align}
Testing~\eqref{eq:L-eta} against~$\oscil{\phi_j}$, we see that
\begin{align}
 \sigma_j b_j =a_j, \quad \forall j\ge 1.
\end{align}

Then, thanks to the orthonormal assumption on this basis,~\eqref{eq:E-lambda-1} implies
\begin{align}
 \frac{\dd E_\lambda}{\dd \lambda}
 =\int_{\Omega} \lambda\RH_\lambda \oscil{\eta_\lambda}\cdot \psi_\lambda \dx + \int_\Omega \RH_\lambda \oscil{\psi_\lambda}\cdot \psi_\lambda\dx
 =\sum_{j\ge 1} (\lambda a_j b_j + a_j^2)
 =\sum_{j\ge 1} (\lambda+\sigma_j)\sigma_j b_j^2 .
\end{align}
We have seen~$\sigma_j\ge \sigma_1>0$.
Meanwhile~$\sigma_j+\lambda\ge  C_P(\lambda,\Omega)>0$ is always  positive.
Note that~$\oscil{\psi_\lambda}\neq 0$ and hence~$\oscil{\eta_\lambda}\neq 0$, by~\eqref{eq:L-eta}.
Therefore
\begin{align}
 \frac{\dd E_\lambda}{\dd \lambda}>0,
\end{align}
i.e.~$E_\lambda$ is a strictly increasing function of~$\lambda$, for~$\lambda\in (-\infty,8\pi)$, as claimed.

\

\subsection{Asymptotics of the energy} We next discuss the asymptotic behavior of the energy; in particular, since it is monotone, it has limits as~$\lambda\to -\infty$ and as~$\lambda\to 8\pi^-$. We now consider the two cases separately.
\begin{lemma}\label{lem8pi}
It holds $\displaystyle{\lim_{\lambda\to8\pi^-}}E_\lambda=+\infty$.
\end{lemma}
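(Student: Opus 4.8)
The plan is to argue by contradiction, trading the (hypothetical) boundedness of the energy for a compactness statement via the Moser--Trudinger inequality. Since we have already shown that $\lambda\mapsto E_\lambda$ is strictly increasing on $(-\infty,8\pi)$ and clearly $E_\lambda>0$, the limit $L\coloneqq \lim_{\lambda\to 8\pi^-}E_\lambda$ exists in $(0,+\infty]$. Suppose, for contradiction, that $L<+\infty$. Recalling $E_\lambda=\tfrac12\int_\Omega|\nabla\psi_\lambda|^2\dx$, the family $\{\psi_\lambda\}_{\lambda<8\pi}$ is then uniformly bounded in $H^1_0(\Omega)$. I would fix any sequence $\lambda_k\nearrow 8\pi$ and show that $\psi_{\lambda_k}$ subconverges to a solution of $(MP_{8\pi})$, which is impossible since $(\Omega,\ba)$ is of first kind.

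The heart of the matter is the compactness step. Writing $v_k\coloneqq\lambda_k\psi_{\lambda_k}$, the energy bound gives $\|\nabla v_k\|_{L^2}^2=2\lambda_k^2 E_{\lambda_k}\le 2(8\pi)^2 L\eqqcolon M^2$, uniformly in $k$ (using $\lambda_k<8\pi$ and $E_{\lambda_k}\le L$). The sharp singular Moser--Trudinger inequality
\begin{align}
 \log\int_\Omega h\, e^{w}\dx \le \frac{1}{16\pi}\int_\Omega |\nabla w|^2\dx + C_\Omega, \qquad w\in H^1_0(\Omega),
\end{align}
applied to $w=q\,v_k$ then yields $\int_\Omega h\, e^{q v_k}\dx\le \exp\big(\tfrac{q^2 M^2}{16\pi}+C_\Omega\big)$ for every finite $q$, uniformly in $k$; this is exactly where the first-kind threshold $8\pi$ enters, keeping the exponent finite. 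Moreover, Jensen's inequality for the probability measure $h\dx/\int_\Omega h\dx$, together with $|\int_\Omega h\, v_k\dx|\le \|h\|_{L^2}\|v_k\|_{L^2}\le CM$, bounds the denominator $\int_\Omega h\, e^{v_k}\dx$ away from $0$. Since $h$ is bounded, it follows that the right-hand side $\RH_{\lambda_k}=h\,e^{v_k}/\int_\Omega h\,e^{v_k}\dx$ of \eqref{eq:MP-lambda} is bounded in $L^q(\Omega)$ for every $q<\infty$.

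With this in hand, elliptic regularity for $-\Delta\psi_{\lambda_k}=\RH_{\lambda_k}$ gives a uniform $W^{2,q}(\Omega)$ bound, hence a uniform $C^{1,\gamma}(\bar{\Omega})$ bound, so along a subsequence $\psi_{\lambda_k}\to\psi_*$ in $C^1(\bar{\Omega})$. Consequently $v_k\to 8\pi\psi_*$ uniformly and, using the $L^q$ bounds for uniform integrability, $\RH_{\lambda_k}\to h\,e^{8\pi\psi_*}/\int_\Omega h\,e^{8\pi\psi_*}\dx$; passing to the limit in the equation shows $\psi_*$ solves $(MP_{8\pi})$, contradicting that $(\Omega,\ba)$ is of first kind. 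Hence $L=+\infty$, as claimed.

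The genuine obstacle—rather than the soft contradiction scheme, which is routine—is making the compactness airtight: one must verify that at bounded energy concentration is truly excluded, and this rests on the sharp singular Moser--Trudinger inequality holding with critical constant $1/(16\pi)$. The subtle point is that the singular points $p_j$ do \emph{not} lower the global threshold below $8\pi$: concentration there is governed by the improved local constant $1/(16\pi(1+\alpha_j))$ (requiring strictly more energy, since $\alpha_j>0$), so the worst case remains concentration at a regular point. One should therefore state the weighted Moser--Trudinger inequality carefully and confirm that the $L^q$ control of $\RH_{\lambda_k}$ persists near each $p_j$, where $h$ vanishes; both are standard for $\alpha_j>0$ but are the load-bearing analytic inputs.
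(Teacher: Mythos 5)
Your proof is correct and follows essentially the same route as the paper: argue by contradiction, convert the assumed finite energy limit into uniform regularity for $\psi_\lambda$, extract a convergent subsequence, and obtain a solution of $(MP_{8\pi})$, contradicting the first-kind assumption. The paper compresses your two middle paragraphs into the single phrase ``standard regularity theory implies $\|\psi_\lambda\|_{C^{2,\alpha}}$ are uniformly bounded, hence by Ascoli--Arzel\`a there is a limit solving the equation at $\lambda=8\pi$''; your Moser--Trudinger/Jensen/elliptic-bootstrap chain is a legitimate way of filling in exactly that phrase.

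One correction to your closing paragraph, which misidentifies the load-bearing input: the \emph{sharpness} of the constant $1/(16\pi)$ is not needed, and no concentration analysis at the singular points is required. Since the contradiction hypothesis fixes a finite energy bound $M$ and you apply the inequality to $q v_k$ with $q$ arbitrary but \emph{fixed}, any inequality of the form $\log\int_\Omega h\,e^{w}\dx \le C_1\|\nabla w\|_{L^2}^2 + C_2$ with finite $C_1$ yields the same conclusion: the resulting bound $\exp\parenthesis{C_1 q^2 M^2 + C_2}$ is finite no matter the value of $C_1$, which is all that the uniform $L^q$ estimate requires. Moreover, since $G_{p_j}\ge 0$ and $\alpha_j>0$ give $h\le 1$, the weighted inequality follows trivially from the unweighted one, and the vanishing of $h$ near the $p_j$ only helps the estimates rather than threatening them. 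Blow-up/concentration (and hence the quantized local constants $1/(16\pi(1+\alpha_j))$) is only relevant when the Dirichlet energy is allowed to diverge, which is precisely what the contradiction hypothesis excludes; the number $8\pi$ and the first-kind assumption enter solely at the final step, to rule out the limiting solution.
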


\begin{proof}
 Argue by contradiction: if not, then
\begin{align}
 \lim_{\lambda\to8\pi^-}E_\lambda= \bar{E}<\infty.
\end{align}
Standard regularity theory implies that~$\|\psi_\lambda\|_{C^{2,\alpha}}$ are uniformly bounded, hence by Ascoli-Arzela, there is a limit~$\varphi\in C^{2,\alpha}_0(\bar{\Omega})$ solving~\eqref{eq:MP-lambda} with~$\lambda=8\pi$, contradicting the choice of the domain, which is assumed to be of first kind.
\end{proof}
\

We now consider the case $\lambda\to-\infty$. As already pointed out in the introduction we can not follow the argument in \cite{bartolucci2019ontheglobal} due to presence of the singular weight. We thus propose an ad hoc argument based on approximation and test functions. The approximation process is needed since we rely on the existence theory for free energy functionals (see for example \cite{csw}) which would require $h$ to be strictly positive. Let us fix some notation.

\

Consider the space of probability densities
\begin{align}
 \cP(\Omega) \coloneqq
 \braces{\rho \in L^1(\Omega)\mid \rho \ge 0\;  a.e., \;  \int_\Omega \rho \dx =1}.
\end{align}
Given~$\rho\in\cP(\Omega)$, its energy is
\begin{align}
 \cE(\rho) \coloneqq
 \frac{1}{2}\int_\Omega \rho(x) (G*\rho)(x)\dx,
\end{align}
where~$G*\rho$ is the convolution with~$\rho$:
\begin{align}
 (G*\rho)(x) = \int_\Omega G(x,y) \rho(y)\dy.
\end{align}
If we denote~$\varphi_\rho=G*\rho$, then~$\varphi_\rho$ is the unique solution of
\begin{align}
 \begin{cases}
  -\Delta \varphi_\rho = \rho, & \mbox{ in } \Omega, \\
  \varphi_\rho=0, & \mbox{ on } \p\Omega
 \end{cases}
\end{align}
and the energy of the density~$\rho$ is actually the Dirichlet energy of~$\varphi_\rho$:
\begin{align}
 E(\varphi_\rho)=\frac{1}{2}\int_\Omega |\nabla \varphi_\rho|^2\dx
 =\frac{1}{2}\int_{\Omega} (-\Delta \varphi_\rho) \varphi_\rho\dx
 =\frac{1}{2}\int_\Omega \rho (G*\rho)\dx
 =\cE(\rho).
\end{align}
The entropy of~$\rho \in \cP(\Omega)$ is given by
\begin{align}
 \cS(\rho)\coloneqq -\int_\Omega (\rho \log \rho)\dx.
\end{align}
We now prove the following.
\begin{prop}\label{prop:lower limit of E}
 For any~$\eps>0$ there exists $\lambda\in\R$ and a solution~$\psi_\lambda$ of~\eqref{eq:MP-lambda} with energy~$E(\psi_\lambda)<\eps$. In particular,
\begin{align}
\lim_{\lambda \to-\infty} E_{\lambda}=0.
\end{align}
\end{prop}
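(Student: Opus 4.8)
The plan is to represent $\psi_\lambda$, or rather its density, as the maximiser of a free energy functional and to bound its energy from above by testing the functional against a probability density concentrated near $\p\Omega$, whose energy is small precisely because $G(x,y)$ vanishes as $y\to\p\Omega$. Since the existence theory for such functionals (see \cite{csw}) requires a strictly positive weight, I first regularise, replacing $h$ by $h_\delta\coloneqq h+\delta$ with $\delta>0$, and let $\delta\to0$ at the very end. For $\lambda<0$ I consider on $\cP(\Omega)$ the functional
\begin{align}
 \cF_\lambda^\delta(\rho)\coloneqq \cS(\rho)+\int_\Omega \rho\log h_\delta\dx+\lambda\,\cE(\rho),
\end{align}
which is strictly concave and bounded above. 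Its Euler--Lagrange equation reads $\rho=\frac{h_\delta e^{\lambda\varphi_\rho}}{\int_\Omega h_\delta e^{\lambda\varphi_\rho}\dx}$ with $\varphi_\rho=G*\rho$, namely the regularised \eqref{eq:MP-lambda}; hence its maximiser $\rho_\lambda^\delta$ produces, through $\psi_\lambda^\delta\coloneqq\varphi_{\rho_\lambda^\delta}$, the unique solution of that equation, and $\cE(\rho_\lambda^\delta)=E(\psi_\lambda^\delta)$.

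To extract a quantitative bound I use the elementary estimates $\cS(\rho)\le\log|\Omega|$ (Jensen) and $\int_\Omega\rho\log h_\delta\dx\le\log\|h_\delta\|_\infty$, valid for every $\rho\in\cP(\Omega)$, to get $\cF_\lambda^\delta(\rho_\lambda^\delta)\le C_\delta+\lambda\,\cE(\rho_\lambda^\delta)$ with $C_\delta\coloneqq\log|\Omega|+\log\|h_\delta\|_\infty$. Comparing with $\cF_\lambda^\delta(\rho_\lambda^\delta)\ge\cF_\lambda^\delta(\rho_*)$ for an arbitrary competitor $\rho_*$ and dividing by $\lambda<0$ gives
\begin{align}
 \cE(\rho_\lambda^\delta)\le \cE(\rho_*)+\frac{A_\delta-C_\delta}{\lambda},
 \qquad A_\delta\coloneqq \cS(\rho_*)+\int_\Omega\rho_*\log h_\delta\dx .
\end{align}
For a fixed competitor the numerator is a finite constant, so the correction vanishes as $\lambda\to-\infty$; everything thus reduces to producing $\rho_*$ of arbitrarily small energy.

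For the competitor I take $\rho_*$ uniform on a thin interior collar $\Omega_t\coloneqq\braces{x\in\Omega : \dist(x,\p\Omega)<t}$. Its entropy $\cS(\rho_*)=\log|\Omega_t|$ is finite, and since the $p_j$ are interior, $h$ is bounded between positive constants on $\Omega_t$ for small $t$, so $A_\delta$ is finite and $A_\delta\to A_0\coloneqq\cS(\rho_*)+\int_\Omega\rho_*\log h\dx$ as $\delta\to0$. Crucially $\cE(\rho_*)\to0$ as $t\to0$: its potential $\varphi_{\rho_*}$ solves $-\Delta\varphi_{\rho_*}=|\Omega_t|^{-1}\chi_{\Omega_t}$ with zero boundary data, and as $t\to0$ the source tends to a boundary measure, against which the Green kernel integrates to zero; hence $\|\varphi_{\rho_*}\|_\infty\to0$ and $\cE(\rho_*)=\tfrac12\int_\Omega\varphi_{\rho_*}\rho_*\dx\le\tfrac12\|\varphi_{\rho_*}\|_\infty\to0$. (In the radial model this is transparent: with $m(r)=\int_{B_r}\rho_*\dx$ one has $\cE=\tfrac1{4\pi}\int_0^1 m(r)^2\,r^{-1}\dd r=O(t)$.) Given $\eps>0$, fix $t$ so that $\cE(\rho_*)<\eps$.

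It remains to take the limits in the correct order. For fixed $\lambda<0$ let $\delta\to0$: standard uniform $C^{2,\alpha}$ bounds, Ascoli--Arzel\`a, and uniqueness of solutions for $\lambda<8\pi$ force $\psi_\lambda^\delta\to\psi_\lambda$, whence $\cE(\rho_\lambda^\delta)=E(\psi_\lambda^\delta)\to E_\lambda$, while $A_\delta\to A_0$ and $C_\delta\to C_0\coloneqq\log|\Omega|+\log\|h\|_\infty$. The inequality above therefore passes to the limit as $E_\lambda\le\cE(\rho_*)+(A_0-C_0)/\lambda$, and letting $\lambda\to-\infty$ yields $\lim_{\lambda\to-\infty}E_\lambda\le\cE(\rho_*)<\eps$. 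Since $\eps$ is arbitrary and $E_\lambda$ is increasing, the limit equals $0$; in particular some solution has $E(\psi_\lambda)<\eps$. The main obstacle is exactly the clash between the singular weight and the variational machinery: the free energy theory is unavailable when $h$ has interior zeros, so one must regularise and then justify the $\delta\to0$ passage, all the while choosing a competitor whose energy is small yet whose entropy and $\log h_\delta$-integral remain finite and stable in $\delta$.
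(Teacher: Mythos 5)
Your proposal is correct and takes essentially the same route as the paper: regularize the singular weight so that the free-energy variational principle of \cite{csw} applies, identify the limit of the regularized solutions with $\psi_\lambda$ via elliptic estimates, compactness and uniqueness for $\lambda<0$, and bound the energy by comparison with a uniform density on a thin boundary collar whose energy vanishes. The differences are only cosmetic: you maximize the concave functional $\cS(\rho)+\int_\Omega\rho\log h_\delta\dx+\lambda\cE(\rho)$ with the explicit regularization $h_\delta=h+\delta$, whereas the paper minimizes its negative with a generic smooth positive approximation $h_n\to h$ in $C^\alpha$, and your bookkeeping of the constants $A_\delta$, $C_\delta$ matches the paper's use of the entropy bound and of $\log(1+\|h\|_{C^0})$.
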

\begin{proof}
We take~$h_n$ a sequence of approximating, smooth, positive weights such that~$h_n \to h$ in~$C^{\alpha}(\Omega)$, for some $\alpha\in(0,1)$. Fix a~$\lambda<0$ and consider the functional~$\cF_{\lambda,n}\colon \cP(\Omega)\to \R$,
 \begin{align}
  \cF_{\lambda,n}(\rho)\coloneqq
  \int_\Omega \rho(x)\log\rho(x)\dx
  -\frac{\lambda}{2}\int_\Omega \rho(x) (G*\rho)(x)\dx
  -\int_\Omega \rho(x)\log h_n(x)\dx.
 \end{align}
 Observe that for $\lambda<0$ the functional $\cF_{\lambda,n}$ is strictly convex and it is well known that  
 there exists a unique minimizer obtained by the following variational principle:
 \begin{align}
  \cF_{\lambda,n}(\rho_{\lambda,n}) = \min\braces{\cF_{\lambda,n}(\rho)\mid \rho\in \cP(\Omega)},
 \end{align}
see for example \cite{csw}, where a more general case is considered. 
 
 The critical point~$\rho_{\lambda,n}$ of~$\cF_{\lambda,n}$ satisfies the equation
 \begin{align}
  \log \rho_{\lambda,n} =\log h_n + \lambda G*\rho_{\lambda,n} + c-1,
 \end{align}
 where~$c$ is the Lagrange multiplier from the constraint~$\int_{\Omega} \rho=1$, see \cite{csw} for details.

 In terms of~$\psi_{\lambda,n}\coloneqq G*\rho_{\lambda,n}$, we have
 \begin{align}\label{eq:psi-lambda-n}
  \begin{cases}
	-\Delta \psi_{\lambda,n}=\rho_{\lambda,n}=\displaystyle{\frac{ h_n e^{\lambda \psi_{\lambda,n}}}{\int_\Omega h_n e^{\lambda \psi_{\lambda,n}}\dx}} & \mbox{ in } \Omega , \\
  \psi_{\lambda,n}=0 & \mbox{ on } \p\Omega.
	\end{cases}
 \end{align}

 Note that~$\int \limits_{\Omega}\rho_{\lambda,n}=1$, so that (\cite{St4}) for any~$p<2$,
 \begin{align}
  \|\psi_{\lambda,n}\|_{W^{1,p}}
  \le C(\Omega,p).
 \end{align}
 In particular, we can assume that, up to a subsequence,~$\psi_{\lambda,n}$ converges weakly in~$W^{1,p}_0(\Omega)$ and strongly in~$L^q(\Omega)$ for any~$q<\infty$, to a limit function~$\psi_{\lambda,\infty}\in W^{1,p}_0(\Omega)$.
 As another consequence, up to a subsequence again,
 \begin{align}
  \int_\Omega \psi_{\lambda,n} h_n\dx \to \int_\Omega \psi_{\lambda,\infty} h\dx.
 \end{align}
 Since~$\lambda<0$, by maximum principle,~$\psi_{\lambda, n}$ is non-negative, so is~$\psi_{\lambda,\infty}$.

\medskip

 To see that~$\psi_{\lambda,\infty}$ is a solution of the mean field equation, we need to estimate the right-hand side of~\eqref{eq:psi-lambda-n}.   The numerator is easily estimated since~$\lambda\psi_{\lambda,n}\le 0$ and~$h_n\to h$ in~$C^{\alpha}$.
 For the denominator, we have
 \begin{align}
  \int_\Omega h_n e^{\lambda \psi_{\lambda,n}} \dx
  =& \| h_n\|_{L^1} \int_\Omega e^{\lambda \psi_{\lambda,n}} \frac{ h_n \dx}{\|h_n\|_{L^1}} \\
  \ge& \|h_n\|_{L^1} \exp\parenthesis{\lambda\int_\Omega \psi_{\lambda,n} \frac{h_n\dx}{\|h_n\|_{L^1}}} \\
  \to& \|h\|_{L^1} \exp\parenthesis{\frac{\lambda}{\|h\|_{L^1}}\int_\Omega \psi_{\lambda,\infty} h\dx }
 \end{align}
 as~$n\to\infty$, where the Jensen inequality is applied to the second line.
 The limit is non-zero, hence we get a uniform lower bound for the denominators in the right-hand side of~\eqref{eq:psi-lambda-n}, hence also a uniform upper bound of the whole right-hand side.
 Then by bootstrap argument we see that the sequence~$\psi_{\lambda,n}$ converges in~$C^{2,\alpha}$ for any~$\alpha$ sufficiently small.
 Therefore, the function~$\psi_{\lambda,\infty}$ satisfies the equation~\eqref{eq:MP-lambda}:

 \begin{align}
\begin{cases}
  -\Delta \psi_{\lambda,\infty}=\dfrac{h e^{\lambda \psi_{\lambda,\infty}}}{\int_\Omega h e^{\lambda\psi_{\lambda,\infty}} \dx } & \mbox{ in } \Omega, \\
  \psi_{\lambda,\infty}=0 & \mbox{ on } \p\Omega.
\end{cases}	
 \end{align}

 Since~$\lambda<0$ this equation admits only one solution so~$\psi_{\lambda,\infty}=\psi_\lambda$. Uniqueness 
 here can be deduced by observing that in fact via a dual formulation $\psi_{\lambda}$ is a solution of \eqref{eq:MP-lambda} if and only if it is a critical point of the convex (for $\lambda<0$) functional 
 $\frac12\int\limits_{\Omega} |\nabla \psi|^2-\frac{1}{\lambda}\log\left(\int\limits_{\Omega} h e^{\lambda\psi}\right)$ on $H^1_0(\Omega)$.

\

At last it remains to show that~$\psi_{\lambda}$ can assume very low energy levels, for which we will use a comparison argument.
 By the strong convergence in~$C^2$, we know that
 \begin{align}
  E(\psi_\lambda)=\lim_{n\to\infty} E(\psi_{\lambda,n}).
 \end{align}
 Recalling the notation introduced right after Lemma \ref{lem8pi}, it is then sufficient to estimate the energy of~$\psi_{\lambda,n}$:
 \begin{align}
  E(\psi_{\lambda,n})
  =\cE(\rho_{\lambda,n})
  =\frac{1}{2}\int_\Omega \rho_{\lambda,n} G*\rho_{\lambda,n}\dx
  =\frac{-1}{\lambda} \cF_{\lambda,n}(\rho_{\lambda,n})
  -\frac{1}{\lambda}\cS(\rho_{\lambda,n})
  -\frac{1}{\lambda}\int_\Omega \rho_{\lambda,n} \log h_n \dx.
 \end{align}
 By the Jensen inequality we know that the entropy is non-positive,
 \begin{align}
  \cS(\rho_{\lambda,n})\le 0.
 \end{align}
 For the linear part we have
 \begin{align}
  \int_{\Omega}\rho_{\lambda,n} \log h_n\dx
  \le \log(1+\|h\|_{C^0}) \int_{\Omega} \rho_{\lambda,n}\dx
  = \log(1+\|h\|_{C^0}).
 \end{align}
 Thus, recalling also $\lambda<0$,
 \begin{align}
  \cE(\rho_{\lambda,n})\le \frac{1}{|\lambda|} \cF(\rho_{\lambda,n}) + \frac{1}{|\lambda|} \log(1+\|h\|_{C^0}).
 \end{align}
  On the other hand, it is known that there exist probability measures with arbitrarily small energy. For example one can consider~$\rho_{\delta}= |C_\delta|^{-1} \mathds{1}_{C_\delta}$ where~$C_\delta\coloneqq\braces{x\in\Omega\mid \dist(x,\p\Omega)<\delta}$ denotes the inner~$\delta$-collar of the boundary. It is well-known that
$$
	\cE(\rho_\delta)\to0
$$
for~$\delta\to 0$. Therefore,
 \begin{align}
  \frac{1}{|\lambda|}\cF(\rho_{\lambda,n})\le \frac{1}{|\lambda|}\cF(\rho_\delta)
  = \cE(\rho_\delta){+\frac{1}{\lambda}\cS(\rho_\delta)}+\frac{1}{\lambda} \int_\Omega \rho_\delta \log h_n\dx.
 \end{align}
 For a fixed small $\delta$, sending~$\lambda\to -\infty$, we see that~$\cE(\rho_{\lambda,n})$ can be arbitrarily small, as claimed.
\end{proof}

\

This completes the proof of Theorem \ref{thm1} and  we get a picture for the solutions in the~$(\lambda, E)$-plane as in Figure~\ref{pic:lambda-E}.

\

\section{The bifurcation diagram} \label{sec:diagram}
In this section we provide the proof of Theorem \ref{thm2}, analyzing the branch of solutions of
\begin{align}\tag{$LP_\mu$} \label{eq:LP-mu}
 \begin{cases}
  -\Delta v= \mu h e^v & \mbox{ in } \Omega \\
  v=0 & \mbox{ on } \p\Omega .
 \end{cases}
\end{align}
Since~$\lambda\mapsto E_\lambda$ is a monotone bijection, and~$\mu=\mu_\lambda$ is explicit, we first consider the graph of
\begin{align}
 E\mapsto \mu_{\lambda(E)}.
\end{align}
Note that
\begin{align}
 \frac{\dd\mu}{\dd E}
 =& \frac{\dd\mu}{\dd\lambda}\frac{\dd \lambda}{\dd E}
   =\frac{\dd\mu_\lambda}{\dd \lambda}\frac{1}{\frac{\dd E_\lambda}{\dd\lambda}}.
\end{align}
Since~$\frac{\dd E_\lambda}{\dd \lambda}>0$, we need to analyze the sign of~$\frac{\dd\mu_\lambda}{\dd\lambda}$.

\

\subsection{The sign of \texorpdfstring{$\p_\lambda \mu$}{Jacobian}}

From~\eqref{eq:from lambda to mu} it follows that
\begin{align}
 \frac{\dd\mu_\lambda}{\dd\lambda}
 =& \frac{\dd}{\dd\lambda}\parenthesis{\frac{\lambda}{\int_{\Omega} h e^{\lambda\psi_\lambda}\dx} }
 =\frac{\dd}{\dd\lambda}\parenthesis{\frac{\lambda}{\int_{\Omega} h e^{u_\lambda}\dx} }
 =\frac{1}{\int_{\Omega} h e^{u_\lambda}\dx}
   \parenthesis{1-\lambda\int_{\Omega}\RH_\lambda\frac{\p u_\lambda}{\p\lambda}\dx }.
\end{align}
For later convenience we denote
\begin{align}
 z_\lambda\equiv u'_\lambda=\frac{\p u_\lambda}{\p\lambda}.
\end{align}
Then
\begin{align}\label{eq:mu-lambda and g}
 \frac{\dd\mu_\lambda}{\dd\lambda}
 =\frac{1}{\int_{\Omega} h e^{u_\lambda}\dx} (1-\lambda \average{z_\lambda}).
\end{align}
We thus study the sign of
\begin{align}
 g(\lambda)\coloneqq 1-\lambda\average{z_\lambda}.
\end{align}
We follow here the ideas from~\cite{bartolucci2019ontheglobal}.

\

{\textbf{Step 1.}} We claim $g(\lambda)>0$ for $\lambda\in(-\infty,\delta)$ for some $\delta>0$.

\medskip

We first prove $\average{z_\lambda}>0$. Indeed, the equation for~$z_\lambda$ is
        \begin{align}
         -\Delta z_\lambda =\RH_\lambda+\lambda\RH_\lambda z_\lambda-\lambda\RH_\lambda\average{z_\lambda},
       \end{align}
       which is obtained from~$-\Delta u_\lambda=\mu_\lambda h e^{u_\lambda}=\lambda \RH_\lambda$. Now,  using~$z_\lambda=\average{z_\lambda}+\oscil{z_\lambda}$, we get
       \begin{align}
         \int_\Omega |\nabla z_\lambda|^2\dx
        =\average{z_\lambda}+\lambda\average{\oscil{z_\lambda}^2}.
       \end{align}
       Hence
       \begin{align}
        \average{z_\lambda}=\int_\Omega|\nabla z_\lambda|^2\dx -\lambda\int_\Omega\RH_\lambda\oscil{z_\lambda}^2\dx
        \ge \sigma_1(\lambda;\Omega)\int_\Omega\RH_\lambda\oscil{z_\lambda}^2\dx
       \end{align}
       which is positive since~$\sigma_1>0$, for any~$\lambda\in(-\infty, 8\pi)$.

\medskip			

Now, by the definition of $g$ we readily have $g(\lambda)>0$ for $\lambda\in(-\infty,0)$. Moreover, $g(0)=1$ and
since~$g(\lambda)$ is continuous,~$g(\lambda)>0$ for~$\lambda>0$ small. This proves the claim.

\

{\textbf{Step 2.}} We prove now
$$
     \lim_{\lambda\to8\pi^-} g(\lambda)=-\infty.
$$

\medskip

Indeed, recalling $\eta_\lambda\equiv \frac{\p\psi_\lambda}{\p\lambda}$ and using the correspondence~$u_\lambda=\lambda\psi_\lambda$, we get
            \begin{align}
             z_\lambda=(u_\lambda)' = (\lambda\psi_\lambda)'=\psi_\lambda+ \lambda\psi_\lambda'
            \end{align}
            and consequently
            \begin{align}
             \average{z_\lambda}
             =\average{\psi_\lambda}
              +\lambda\average{\eta_\lambda}
             =2E_\lambda+\lambda E_\lambda' \to +\infty,
            \end{align}
            as $\lambda\to 8\pi^-$, where we have used
            \begin{align}
             E_\lambda
             =& \frac{1}{2}\int_\Omega \RH_\lambda\psi_\lambda \dx
               =\average{\psi_\lambda}, \\
             E'_\lambda
             =& \int_\Omega (-\Delta \eta_\lambda)\psi_\lambda\dx
               =\int_\Omega (-\Delta\psi_\lambda)\eta_\lambda\dx
               =\int_\Omega \RH_\lambda\eta_\lambda\dx
               =\average{\eta_\lambda}.
            \end{align}

\

{\textbf{Step 3.}} We claim that $g|_{(0,8\pi)}$ has only one zero.

\medskip

Clearly, by Steps 1, 2 and since~$g(\lambda)$ is continuous, there exists a~$\lambda_* \in (0, 8\pi)$ such that~$g(\lambda_*)=0$. We now prove this is the only zero. The argument works as in \cite{bartolucci2019ontheglobal} so we omit the details and briefly sketch the main steps. Suppose with no loss of generality that $\lambda_*>0$ is the smallest zero. It suffices to show that $g|_{(\lambda_*, 8\pi)} <0$. If not, then it is possible to show that there would exist
            \begin{align}
             \hat{\lambda}\coloneqq \sup\braces{\tau>\lambda_*\mid g|_{(\lambda_*,\tau)}\le 0}
            \end{align}
with $g(\hat{\lambda})=0$ and~$g'(\hat{\lambda})\ge 0$. 		

\medskip

Now, we can put the equation for~$g(\lambda)$ into the form
       \begin{align}
        g'(\lambda)=a(\lambda) g(\lambda)+b(\lambda)
       \end{align}
with $b(\lambda)=-\lambda^2\average{z_\lambda^3}$ and for some $a(\lambda)$, see \cite[Lemma 5.2]{bartolucci2019ontheglobal}. Therefore,
            \begin{align}
             0\leq g'(\hat{\lambda})
             =a(\hat{\lambda}) \underbrace{g(\hat{\lambda})}_{=0}+ b(\hat{\lambda})
             =b(\hat{\lambda})
             =-\hat{\lambda}^2\left<z_{\hat{\lambda}}^3 \right>_{\hat{\lambda}}.
            \end{align}
On the other hand, we can use Proposition 5.1 (iii) in \cite{bartolucci2019ontheglobal} to conclude that, since $g(\hat{\lambda})=0$, then~$z_{\hat{\lambda}}\geq0$ in~$\Omega$ and $\langle z_{\hat{\lambda}}^3 \rangle_{\hat{\lambda}}> 0$, which yields to a contradiction.

\
						
This concludes the study of $g(\lambda)$ and we depict its graph in Figure \ref{fig:g}.

\begin{figure}[h]
\centering
 \begin{tikzpicture}
 \draw[gray, very thin,->] (0,-3) -- (0,3) node[left]{$g$};
 \draw[gray, very thin,->] (-2,0) -- (4,0) node[right]{$\lambda$};
 \draw[dashed] (3,-3) -- (3,3);
 \draw (3.25, -0.20) node{$8\pi$};
 \draw[thick] plot [smooth] coordinates{(-2,0.5) (-1,0.95) (0,1.1) (1,0.96) (2.3,0) (2.9, -3)};
 \draw (0,1.5)  node[left]{$1$};
 \draw (1.9,0.2) node{$\lambda_*$};
\end{tikzpicture}\\
 \caption{The function~$g(\lambda)$.}
\label{fig:g}
\end{figure}
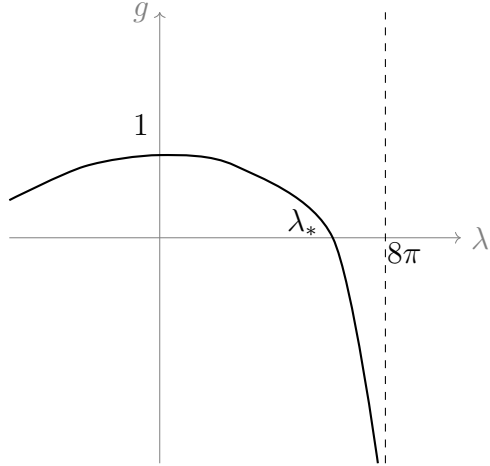

\

By~\eqref{eq:mu-lambda and g} we finally have what we were looking for:
\begin{align}
 \frac{\dd\mu_\lambda}{\dd\lambda}
 \begin{cases}
  >0, & \mbox{ if } \lambda\in (-\infty, \lambda_*) \\
  =0, & \mbox{ if } \lambda=\lambda_*\\
  <0, & \mbox{ if } \lambda\in (\lambda_*, 8\pi).
 \end{cases}
\end{align}

\

\subsection{The bifurcation diagram}
Let~$E_*\equiv E_{\lambda_*}$ with~$\lambda_*$ as above.
In the~$(E,\mu)$-plane, the solution is indicated by the curve
\begin{align}
 E\mapsto \lambda_E \mapsto \mu_{\lambda_E}\equiv \mu(E),
\end{align}
for~$E\in (0, +\infty)$.
We want to see the shape of this curve.
Recall that~$\frac{\dd E_\lambda}{\dd\lambda}>0$, and
\begin{align}
 \frac{\dd \mu(E)}{\dd E}
 =\frac{\dd\mu_\lambda}{\dd\lambda}
  \frac{1}{\frac{\dd E_\lambda}{\dd\lambda}}
 \begin{cases}
  >0, & \mbox{ if } \lambda\in (-\infty, \lambda_*) \quad  \Longleftrightarrow E\in (0, E_*) \\
  =0, & \mbox{ if } \lambda=\lambda_* \qquad \qquad \Longleftrightarrow E=E_*\\
  <0, & \mbox{ if } \lambda\in (\lambda_*, 8\pi) \qquad \Longleftrightarrow E\in (E_*, +\infty).
 \end{cases}
\end{align}
Also,~$\mu(E)=0$ iff~$\mu_{\lambda(E)}=0$ iff~$\lambda(E)=0$ iff~$E=E_0$.
Clearly~$E_*>E_0$ because~$\lambda_*>0$.
Thus we know that the curve~$(E,\mu(E))$ passes through the points
\begin{align}
 (E_0, 0) , \quad (E_*, \mu_{\lambda_*}\equiv \mu_*).
\end{align}

\medskip

As for the asymptotics, we know from monotonicity that the following limits exist
\begin{align}
 \lim_{E\to 0^+} \mu(E)= \lim_{\lambda\to -\infty} \mu_\lambda\equiv \mu_0, \quad
 \lim_{E\to+\infty} \mu(E)
 =\lim_{\lambda\to8\pi^-}\mu_\lambda\equiv \mu_1.
\end{align}

Note that for any~$\mu\leq0$, the equation~\eqref{eq:LP-mu} has a unique solution. Therefore,~$\mu_1\ge 0$ and~$\mu_0=-\infty$. Finally, by the definition of $\mu_\lambda$ in \eqref{eq:from lambda to mu} and the properties of the one-point blowup solutions as $\lambda\to8\pi^-$, we have $\mu_1=0$, see for example \cite{BL},

\medskip

Combining the above facts, we see that the solution curve in the~$(E,\mu)$-plane has the shape depicted in Figure \ref{fig}. By a reflection with respect to the diagonal line we get the desired solution curve in the~$(\mu,E)$-plane, see Figure~\ref{pic:mu-E}. This completes the proof of Theorem \ref{thm2}.

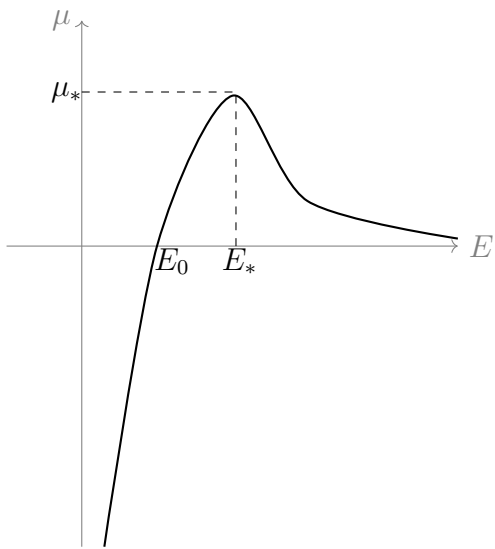
\begin{figure}[h]
\centering
 \begin{tikzpicture}
 \draw[gray, thin,->] (0,-4) -- (0,3) node[left]{$\mu$};
 \draw[gray, thin,->] (-1,0) -- (5,0) node[right]{$E$};
 \draw[thick] plot [smooth] coordinates{(0.3,-4) (1,0) (2,2) (3,0.6) (5,0.1)};
 \draw (1.2,-0.2) node{$E_0$};
 \draw (2.1,-0.2) node{$E_*$};
 \draw[dashed] (2.05,0)--(2.05,2);
 \draw[dashed] (0,2.05)--(2.05,2.05);
 \draw (-0.2,2.05) node{$\mu_*$};
\end{tikzpicture}\\
 \caption{The shape of the bifurcation diagram.}
\label{fig}
\end{figure}

\

\

\end{document}